\newcommand{\be}{\begin{equation}}
\newcommand{\ee}{\end{equation}}
\newcommand{\beq} {\begin{equation}}
\newcommand{\eeq} {\end{equation}}
\newcommand{\ba}{\begin{eqnarray}}
\newcommand{\ea}{\end{eqnarray}}
\newtheorem{mydef}{Definition}
\newtheorem*{corollary*}{Corollary}
\newtheorem{theo}{Theorem}
\begin{document}

	\title{Solving Linear Tensor Equations II: Including Parity Odd Terms in $4$-dimensions}
	
	\author{Damianos Iosifidis}
	\affiliation{Institute of Theoretical Physics, Department of Physics
		Aristotle University of Thessaloniki, 54124 Thessaloniki, Greece}
	\email{diosifid@auth.gr}
	
	\date{\today}
	\begin{abstract}
		
		In this letter, focusing in $4$-dimensions, we extend our previous results of solving linear tensor equations. In particular, we consider a $30$ parameter linear tensor equation for the unknown tensor components $N_{\alpha\mu\nu}$ in terms of the known (sources) components $B_{\alpha\mu\nu}$. Our extension now consists in including also the parity even linear terms in $N_{\alpha\mu\nu}$ (and the associated traces) formed by contracting the latter with the $4$-dimensional Levi-Civita pseudo-tensor.  Assuming a generic non-degeneracy condition and following a step by step procedure  we show how one can solve explicitly for the unknown tensor field components $N_{\alpha\mu\nu}$ and consequently derive its unique and exact solution in terms of the components $B_{\alpha\mu\nu}$.

	\end{abstract}
	
	\maketitle
	
	\allowdisplaybreaks
	
	
	
	\section{Introduction}
	Extending the results we obtained in \cite{iosifidis2021solving}, here we consider the most  general linear tensor equation in $4$-dimensions where now all possible  parity odd (i.e contractions with the Levi-Civita pseudotensor) combinations of a rank $3$ tensor\footnote{For applications of rank $3$ tensors in physics see \cite{qi2018third} (see also \cite{auffray2013geometrical}). In addition, a nice review on tensor calculus is found in \cite{landsberg2012tensors}.} are also included on top of the even ones. As we show in the proceeding discussion such a general linear tensor equation involves $30$ parameters and consists of $15$ distinct combinations of the unknown rank three tensor field $N_{\alpha\mu\nu}$. The task is to prescribe a method allowing to solve the aforementioned tensor field in terms of a given (known) one\footnote{Resorting to some decomposition scheme would not work given the complexity of the $30$ parameter tensor equation. In addition for tensors of rank $r>2$ there is no unique irreducible decomposition \cite{itin2020decomposition}.}. Such linear tensor equations appear in quadratic Metric-Affine Gravity Theories \cite{hehl1995metric,iosifidis2019metric} and their solutions give the distortion tensor\footnote{The distortion tensor is the deviation of the affine connection from the usual Levi-Civita one. See for instance \cite{schouten1954ricci,iosifidis2019metric}.} in terms of the hypermomentum tensor. Having solved for the distortion one can then readily compute spacetime torsion and non-metricity. This is but a simple application of the result which we shall present here and it can possibly be applied to other branches of physics as well. We shall start with some basic definitions and subsequently state and prove our Theorem.

	\section{Definitions}
	 Let us now give some basic definitions we are going to be using throughout. We shall consider a $4$-dimensional differentiable manifold endowed with a metric $g$ and an affine connection $\nabla$, namely $(g, \nabla, \mathcal{M})$. The signature of the metric will be denoted by $s$. In addition, in the proceeding discussion $N_{\alpha\mu\nu}$ will denote the components of an arbitrary rank-$3$ tensor field and $\varepsilon^{\alpha\beta\kappa\lambda}$ will be the components of the $4$-dimensional Levi-Civita pseudo-tensor.
	 \begin{mydef}We define the $1^{st}$, $2^{nd}$ and $3^{rd}$ contractions of $N_{\alpha\mu\nu}$ according to
	\beq
	N^{(1)}_{\mu}:=N_{\alpha\beta\mu}g^{\alpha\beta}\;\;, \;\; N^{(2)}_{\mu}:=N_{\alpha\mu\beta}g^{\alpha\beta}\;\;, \;\; N^{(3)}_{\mu}:=N_{\mu\alpha\beta}g^{\alpha\beta}
	\eeq
	respectively. 
	\end{mydef}
		\begin{mydef} Contracting $N_{\alpha\mu\nu}$ with the Levi-Civita pseudotensor we form the $3$ parity odd combinations as follows
			\beq
		M^{(1)}_{\lambda\alpha\beta}:=N_{\mu\nu\lambda}\varepsilon^{\mu\nu}_{\;\;\;\;\alpha\beta}\;\;, \;\; 	M^{(2)}_{\nu\alpha\beta}:=N_{\mu\nu\lambda}\varepsilon^{\mu\lambda}_{\;\;\;\;\alpha\beta}\;\;, \;\; M^{(3)}_{\mu\alpha\beta}:=N_{\mu\nu\lambda}\varepsilon^{\nu\lambda}_{\;\;\;\;\alpha\beta}
			\eeq
			Note that by construction each of the $M^{(i)}$'s is antisymmetric in its last pair of indices. In addition, we can construct a $4^{th}$ pseudo-trace  as
			\beq
			M^{\alpha}=N^{(4)\alpha}:=\varepsilon^{\alpha\mu\nu\lambda}N_{\mu\nu\lambda}
			\eeq
			Note also that further contractions of the $M^{(i)}$'s  do not give any new traces since $g^{\mu\nu}M^{(1)}_{\mu\nu\alpha}=-g^{\mu\nu}M^{(2)}_{\mu\nu\alpha}=g^{\mu\nu}M^{(3)}_{\mu\nu\alpha}=-M_{\alpha}$ and the rest are either identically vanishing or proportional to the latter. With the above definitions we are now in a position to state and prove the main result of this letter.
			
			\section{The Theorem}
			
		\end{mydef}
	\begin{theo}
		In a $4$-dimensional space of signature $s$, consider the $30$ parameter tensor equation
		\begin{gather}
	a_{1}N_{\alpha\mu\nu}+a_{2}N_{\nu\alpha\mu}+a_{3}N_{\mu\nu\alpha}+a_{4}N_{\alpha\nu\mu}+a_{5}N_{\nu\mu\alpha}+a_{6}N_{\mu\alpha\nu}+\sum_{i=1}^{3}\Big( a_{7 i}N^{(i)}_{\mu}g_{\alpha\nu}+a_{8 i}N^{(i)}_{\nu}g_{\alpha\mu}+a_{9 i}N^{(i)}_{\alpha}g_{\mu\nu} \Big) \nonumber \\
	+b_{11}M^{(1)}_{\alpha\mu\nu}+b_{12}M^{(1)}_{\nu\alpha\mu}+b_{13}M^{(1)}_{\mu\nu\alpha}+b_{21}M^{(2)}_{\alpha\mu\nu}+b_{22}M^{(2)}_{\nu\alpha\mu}+b_{23}M^{(2)}_{\mu\nu\alpha}+b_{31}M^{(3)}_{\alpha\mu\nu}+b_{32}M^{(3)}_{\nu\alpha\mu}+b_{33}M^{(3)}_{\mu\nu\alpha} \nonumber \\
+\varepsilon_{\rho\alpha\mu\nu}\Big( b_{1}N^{(1)\rho}+ b_{2}N^{(2)\rho}+ b_{3}N^{(3)\rho}\Big)	+c_{1}M_{\mu}g_{\alpha\nu}+c_{2}M_{\nu}g_{\alpha\mu}+c_{3}M_{\alpha}g_{\mu\nu}
	=B_{\alpha\mu\nu} \label{eq1}
	\end{gather}
		where  $a_{i}$, $a_{ji}$ $i=1,2,...,6$, $j=7,8,9$ are scalars, $b_{kl}$, $c_{m}$ are pseudo-scalars,  $B_{\alpha\mu\nu}$   is a given (known) tensor and $N_{\alpha\mu\nu}$ are the components of the unknown tensor\footnote{Of course the result holds true even when $N_{\alpha\mu\nu}$ are the components of a tensor density instead or even of a connection given that $B_{\alpha\mu\nu}$ are also  of the same kind. } $N$. Define the matrices
		\setcounter{MaxMatrixCols}{20}
		\begin{equation}
		A := 
		\begin{pmatrix}
		\alpha_{11} & \alpha_{12} & \alpha_{13} & \alpha_{14} & \alpha_{15} & \alpha_{16} & \alpha_{17} & \alpha_{18} & \alpha_{19} & \alpha_{1,10}&\alpha_{1,11} & \alpha_{1,12} & \alpha_{1,13} & \alpha_{1,14} & \alpha_{1,15}  \\
	\alpha_{21} & \alpha_{22} & \alpha_{23} & \alpha_{24} & \alpha_{25} & \alpha_{26} & \alpha_{27} & \alpha_{28} & \alpha_{29} & \alpha_{2,10} & \alpha_{2,11} & \alpha_{1,12} & \alpha_{2,13} & \alpha_{2,14} & \alpha_{2,15} \\
	\alpha_{31} & \alpha_{32} & \alpha_{33} & \alpha_{34} & \alpha_{35} & \alpha_{36} & \alpha_{37} & \alpha_{38} & \alpha_{39} & \alpha_{3,10} & \alpha_{3,11} & \alpha_{1,12} & \alpha_{1,13} & \alpha_{1,14} & \alpha_{1,15}  \\
	\alpha_{41} & \alpha_{42} & \alpha_{43} & \alpha_{44} & \alpha_{45} & \alpha_{46} & \alpha_{47} & \alpha_{48} & \alpha_{49} & \alpha_{4,10} & \alpha_{4,11} &  \alpha_{4,12} & \alpha_{4,13} & \alpha_{4,14} & \alpha_{4,15} \\
	\alpha_{51} & \alpha_{52} & \alpha_{53} & \alpha_{54} & \alpha_{55} & \alpha_{56} & \alpha_{57} & \alpha_{58} & \alpha_{59} & \alpha_{5,10} & \alpha_{5,11} & \alpha_{5,12} & \alpha_{5,13} & \alpha_{5,14} & \alpha_{5,15} \\
	\alpha_{61} & \alpha_{62} & \alpha_{63} & \alpha_{64} & \alpha_{65} & \alpha_{66} & \alpha_{67} & \alpha_{68} & \alpha_{69} & \alpha_{6,10} & \alpha_{6,11} & \alpha_{6,12} & \alpha_{6,13} & \alpha_{6,14} & \alpha_{6,15}  \\
		\alpha_{71} & \alpha_{72} & \alpha_{73} & \alpha_{74} & \alpha_{75} & \alpha_{76} & \alpha_{77} & \alpha_{78} & \alpha_{79} & \alpha_{7,10} & \alpha_{7,11} & \alpha_{7,12} & \alpha_{7,13} & \alpha_{7,14} & \alpha_{7,15}  \\
	\alpha_{81} & \alpha_{82} & \alpha_{83} & \alpha_{84} & \alpha_{85} & \alpha_{86} & \alpha_{87} & \alpha_{88} & \alpha_{89} & \alpha_{8,10} & \alpha_{8,11} & \alpha_{8,12} & \alpha_{8,13} & \alpha_{8,14} & \alpha_{8,15} \\
	\alpha_{91} & \alpha_{92} & \alpha_{93} & \alpha_{94} & \alpha_{95} & \alpha_{96} & \alpha_{97} & \alpha_{98} & \alpha_{99} & \alpha_{9,10} & \alpha_{9,11} &  \alpha_{9,12} &\alpha_{9,13} & \alpha_{9,14} & \alpha_{9,15}  \\
	\alpha_{10,1} & \alpha_{10,2} & \alpha_{10,3} & \alpha_{10,4} & \alpha_{10,5} & \alpha_{10,6} & \alpha_{10,7} & \alpha_{10,8} & \alpha_{10,9} & \alpha_{10,10} & \alpha_{10,11} & \alpha_{10,12} & \alpha_{10,13} & \alpha_{10,14} & \alpha_{10,15} \\
	\alpha_{11,1} & \alpha_{11,2} & \alpha_{11,3} & \alpha_{11,4} & \alpha_{11,5} & \alpha_{11,6} & \alpha_{11,7} & \alpha_{11,8} & \alpha_{11,9} & \alpha_{11,10} & \alpha_{11,11} & \alpha_{11,12} & \alpha_{11,13} & \alpha_{11,14} & \alpha_{11,15} \\
	\alpha_{12,1} & \alpha_{12,2} & \alpha_{12,3} & \alpha_{12,4} & \alpha_{12,5} & \alpha_{12,6} & \alpha_{12,7} & \alpha_{12,8} & \alpha_{12,9} & \alpha_{12,10} & \alpha_{12,11} & \alpha_{12,12} & \alpha_{12,13} & \alpha_{12,14} & \alpha_{12,15}  \\
	\alpha_{13,1} & \alpha_{13,2} & \alpha_{13,3} & \alpha_{14,4} & \alpha_{14,5} & \alpha_{14,6} & \alpha_{14,7} & \alpha_{14,8} & \alpha_{14,9} & \alpha_{1,10} & \alpha_{1,11} & \alpha_{1,12} & \alpha_{1,13} & \alpha_{1,14} & \alpha_{1,15}  \\
		\alpha_{11} & \alpha_{12} & \alpha_{13} & \alpha_{14} & \alpha_{15} & \alpha_{16} & \alpha_{17} & \alpha_{18} & \alpha_{19} & \alpha_{1,10} & \alpha_{14,11} & \alpha_{14,12} & \alpha_{14,13} & \alpha_{14,14} & \alpha_{14,15}  \\
	\alpha_{15,1} & \alpha_{15,2} & \alpha_{15,3} & \alpha_{15,4} & \alpha_{15,5} & \alpha_{15,6} & \alpha_{15,7} & \alpha_{15,8} & \alpha_{15,9} & \alpha_{15,10} & \alpha_{15,11} &  \alpha_{15,12} &\alpha_{15,13} & \alpha_{15,14} & \alpha_{15,15}  \\
		\end{pmatrix} \label{A}
		\end{equation}
		and 
		\begin{equation}
	\Gamma := 
	\begin{pmatrix}
	\gamma_{11} & \gamma_{12} & \gamma_{13} & \gamma_{14}  \\
	\gamma_{21} & \gamma_{22} & \gamma_{23} & \gamma_{24}  \\
	\gamma_{31} & \gamma_{32} & \gamma_{33} & \gamma_{34}  \\
	\gamma_{41} & \gamma_{42} & \gamma_{43} & \gamma_{44}  \\
	\end{pmatrix} \label{G}
	\end{equation}		
		where  the $\alpha_{ij}$ and  $\gamma_{ij}$'s are some linear combinations of the original $a_{i}$, $a_{ji}$, $b_{j}$ and $c_{k}$'s (see appendix). Then given that both of these matrices are non-singular, that is if	
		\beq
		det(A) \neq 0 \;\; and \;\; det(\Gamma) \neq 0
		\eeq
		holds true, then the general and unique solution of ($\ref{eq1}$) reads
	\begin{gather}
	N_{\alpha\mu\nu}=\tilde{\alpha}_{11}\hat{B}_{\alpha\mu\nu}+\tilde{\alpha}_{12}\hat{B}_{\nu\alpha\mu}+\tilde{\alpha}_{13} \hat{B}_{\mu\nu\alpha}+\tilde{\alpha}_{14} \hat{B}_{\alpha\nu\mu}+\tilde{\alpha}_{15} \hat{B}_{\nu\mu\alpha} +\tilde{\alpha}_{16} \hat{B}_{\mu\alpha\nu}+\tilde{\alpha}_{17}\breve{B}_{\alpha\mu\nu}+\tilde{\alpha}_{18} \breve{B}_{\mu\nu\alpha} + \tilde{\alpha}_{19}\breve{B}_{\nu\alpha\mu} \nonumber \\
	+\tilde{\alpha}_{1,10}\bar{B}_{\alpha\mu\nu}+\tilde{\alpha}_{1,11}\bar{B}_{\mu\nu\alpha}+\tilde{\alpha}_{1,12}\bar{B}_{\nu\alpha\mu}+\tilde{\alpha}_{1,13}\mathring{B}_{\alpha\mu\nu}+\tilde{\alpha}_{1,14}\mathring{B}_{\mu\nu\alpha}+\tilde{\alpha}_{1,15}\mathring{B}_{\nu\alpha\mu}
	\end{gather}
		where the $\tilde{\alpha}_{1i}$ $  's$ are the first row elements of the inverse matrix $A^{-1}$ and 
			\beq
		\hat{B}_{\alpha\mu\nu} =B_{\alpha\mu\nu} -\sum_{i=1}^{4}\sum_{j=1}^{4}\Big( a_{7i}\tilde{\gamma}_{ij}B^{(j)}_{\mu}g_{\alpha\nu}+a_{8i}\tilde{\gamma}_{ij}B^{(j)}_{\nu}g_{\alpha\mu}+ a_{9i}\tilde{\gamma}_{ij}B^{(j)}_{\alpha}g_{\mu\nu} \Big)-\varepsilon^{\rho}_{\;\;\alpha\mu\nu}\sum_{i=1}^{3}\sum_{j=1}^{4}b_{i}\tilde{\gamma}_{ij}B^{(j)}_{\rho}
		\eeq
		\beq
		\breve{B}_{\alpha\mu\nu}=	\varepsilon^{\beta\gamma}_{\;\;\;\;\alpha\mu}\hat{B}_{\beta\gamma\nu}-2(-1)^{s}\sum_{j=1}^{4}\Big[ (b_{21}+b_{23}+b_{31}+b_{33})\tilde{\gamma}_{1j}+(b_{11}+b_{13}-b_{31}-b_{33})\tilde{\gamma}_{2j}-(b_{11}+b_{13}+b_{21}+b_{23})\tilde{\gamma}_{3j}\Big]B^{(j)}_{[\alpha}g_{\mu]\nu}
		\eeq
		\beq
		\bar{B}_{\alpha\mu\nu}:=	\varepsilon^{\beta\gamma}_{\;\;\;\;\alpha\nu}\hat{B}_{\beta\mu\gamma}-2(-1)^{s}\sum_{j=1}^{4}\Big[ (b_{21}+b_{22}+b_{31}+b_{32})\tilde{\gamma}_{1j}+(b_{11}+b_{12}-b_{31}-b_{32})\tilde{\gamma}_{2j}-(b_{11}+b_{12}+b_{21}+b_{22})\tilde{\gamma}_{3j}\Big]B^{(j)}_{[\alpha}g_{\mu]\nu}
		\eeq
		\beq
		\mathring{B}_{\alpha\mu\nu}:=	\varepsilon^{\beta\gamma}_{\;\;\;\;\mu\nu}	\mathring{B}_{\alpha\beta\gamma}-2(-1)^{s+1}\sum_{j=1}^{4}\Big[ (b_{22}+b_{23}+b_{32}+b_{33})\tilde{\gamma}_{1j}+(b_{12}+b_{13}-b_{32}-b_{33})\tilde{\gamma}_{2j}-(b_{22}+b_{23}+b_{12}+b_{13})\tilde{\gamma}_{3j}\Big]B^{(j)}_{[\mu}g_{\nu]\alpha}
		\eeq
		$\tilde{\gamma}_{ij}'s$ being the elements of the inverse matrix $\Gamma^{-1}$.
	\end{theo}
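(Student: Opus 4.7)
The plan is to follow a two-stage reduction: first eliminate the four traces $N^{(1)}_{\mu}, N^{(2)}_{\mu}, N^{(3)}_{\mu}, M_{\mu}$ by forming suitable contractions of (\ref{eq1}), and then eliminate the nine parity-odd pieces $M^{(1)}_{\alpha\mu\nu}, M^{(2)}_{\alpha\mu\nu}, M^{(3)}_{\alpha\mu\nu}$ (and their permutations) by suitable $\varepsilon$-contractions, leaving a closed linear system in the six permutations of $N_{\alpha\mu\nu}$.

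\textbf{Step 1 (trace elimination).} First I would contract (\ref{eq1}) successively with $g^{\alpha\nu}, g^{\alpha\mu}, g^{\mu\nu}$ and $\varepsilon^{\alpha\mu\nu\rho}$. Each of the six $a_i$ terms, the nine trace-with-metric terms, the nine $b_{kl}$ terms, the three $b_i\varepsilon N^{(i)\rho}$ terms and the three $c_m$ terms produce expressions that collapse into linear combinations of the four vectors $N^{(1)},N^{(2)},N^{(3)},M$. Collecting coefficients gives a $4\times 4$ system
\[
\Gamma\,\bigl(N^{(1)},N^{(2)},N^{(3)},M\bigr)^{T}\;=\;\bigl(B^{(1)},B^{(2)},B^{(3)},B^{(4)}\bigr)^{T},
\]
whose matrix is precisely the $\Gamma$ of the statement. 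Since $\det\Gamma\neq 0$, invert to obtain $N^{(j)}_{\mu}=\sum_{k}\tilde{\gamma}_{jk}B^{(k)}_{\mu}$. Substituting these back into (\ref{eq1}) absorbs the $a_{7i},a_{8i},a_{9i},b_i,c_m$ terms into the right-hand side; calling the resulting source $\hat{B}_{\alpha\mu\nu}$ reproduces the definition in the statement and leaves a reduced equation of the form
\[
\sum_{i=1}^{6}a_{i}\,(\text{perm})_{i}N_{\alpha\mu\nu}+\sum_{k,l}b_{kl}\,(\text{perm})_{l}M^{(k)}_{\alpha\mu\nu}=\hat{B}_{\alpha\mu\nu}.
\]

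\textbf{Step 2 (parity-odd elimination).} Now I would contract the reduced equation with $\varepsilon^{\beta\gamma}{}_{\alpha\mu}$, $\varepsilon^{\beta\gamma}{}_{\alpha\nu}$ and $\varepsilon^{\beta\gamma}{}_{\mu\nu}$ respectively. The identity $\varepsilon_{\alpha\beta\mu\nu}\varepsilon^{\rho\sigma\mu\nu}=2(-1)^{s}(\delta^{\rho}_{\alpha}\delta^{\sigma}_{\beta}-\delta^{\sigma}_{\alpha}\delta^{\rho}_{\beta})$ converts each $M^{(k)}$ into a permutation of $N$, up to trace pieces; these last trace pieces are already known from Step 1 and, absorbed into the source, generate exactly the tensors $\breve{B}, \bar{B}, \mathring{B}$ of the statement, along with the signature-dependent prefactors $(-1)^{s}$ appearing there. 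The three $\varepsilon$-contractions together with the six permutations of the reduced equation itself produce fifteen linear combinations of the fifteen unknowns $\{N_{\alpha\mu\nu},N_{\nu\alpha\mu},\ldots\}$ (six $N$-permutations plus nine further permutations arising from the conversions of $M^{(1)},M^{(2)},M^{(3)}$), yielding a $15\times 15$ system
\[
A\,\mathbf{N}=\bigl(\hat{B},\hat{B}_{\nu\alpha\mu},\ldots,\breve{B},\ldots,\bar{B},\ldots,\mathring{B},\ldots\bigr)^{T},
\]
whose coefficient matrix is the $A$ of the statement.

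\textbf{Step 3 (inversion and uniqueness).} Since $\det A\neq 0$, the system has a unique solution. Reading off the first row of $A^{-1}$ gives $N_{\alpha\mu\nu}=\sum_{j=1}^{15}\tilde{\alpha}_{1j}\,(\text{source})_{j}$, which is precisely the closed form in the theorem. Uniqueness follows from the non-vanishing determinants, and existence is verified by direct substitution (which is built into the construction: every reduction step is a tautology once the invertibilities hold). The hard part is purely bookkeeping: explicitly identifying the entries $\gamma_{ij}$ and $\alpha_{ij}$ as specific linear combinations of $a_i,a_{ji},b_{kl},b_i,c_m$ requires carefully tracking how each of the thirty original coefficients distributes among the four trace equations and the fifteen tensor permutations after the $\varepsilon\varepsilon$ contractions, including the signature signs $(-1)^{s}$; this is the step most susceptible to algebraic error and is deferred to the appendix where the $\alpha_{ij},\gamma_{ij}$ are tabulated.
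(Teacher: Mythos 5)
Your proposal follows essentially the same route as the paper: eliminate the four traces (and pseudo-trace) via the $4\times 4$ system $\Gamma X=Y$, substitute back to get the trace-free reduced equation with source $\hat{B}_{\alpha\mu\nu}$, then assemble a $15\times 15$ system in the six permutations of $N$ and nine permutations of the $M^{(i)}$'s and invert $A$. The one point to state explicitly is that the three $\varepsilon$-contractions alone give only $6+3=9$ equations, so each $\varepsilon$-contracted equation must itself be cyclically permuted twice more to supply the full nine additional rows (three each for $\breve{B}$, $\bar{B}$, $\mathring{B}$); with that counting made precise, your argument coincides with the paper's proof.
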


	\begin{proof}
		  The first step consists in removing the traces (and pseudo-trace) of $N$ and expressing them in terms of corresponding traces of the known tensor $B$. To this end we perform $4$ distinct operations on $(\ref{eq1})$ that is we contract the latter respectively with $g^{\alpha\mu}$, $g^{\alpha\nu}$, $g^{\mu\nu}$ and $\varepsilon^{\lambda\alpha\mu\nu}$ which, after some renaming of the indices, give us the system
		  \begin{gather}
		  \gamma_{11}N^{(1)}_{\mu}+\gamma_{12}N^{(2)}_{\mu}+\gamma_{13}N^{(3)}_{\mu}+\gamma_{14}N^{(4)}_{\mu}=B^{(1)}_{\mu}  \\
		  \gamma_{21}N^{(1)}_{\mu}+\gamma_{22}N^{(2)}_{\mu}+\gamma_{23}N^{(3)}_{\mu}+\gamma_{24}N^{(4)}_{\mu}=B^{(2)}_{\mu}  \\
		  \gamma_{31}N^{(1)}_{\mu}+\gamma_{32}N^{(2)}_{\mu}+\gamma_{33}N^{(3)}_{\mu}+\gamma_{34}N^{(4)}_{\mu}=B^{(3)}_{\mu}  \\
		  \gamma_{41}N^{(1)}_{\mu}+\gamma_{42}N^{(2)}_{\mu}+\gamma_{43}N^{(3)}_{\mu}+\gamma_{44}N^{(4)}_{\mu}=B^{(4)}_{\mu} 
		  \end{gather}	
		where $\gamma_{ij}$ are linear combinations of the initial $27$ parameters whose exact form we give in the appendix. We have also used the notation $M_{\mu}=N^{(4)}_{\mu}$. The above is a system of $4$ equations with $4$ unknowns which we may express in matrix form as
		\beq
		\Gamma X=Y \label{tr}
		\eeq
		where we have defined the columns $X:=(N^{(1)}_{\mu},N^{(2)}_{\mu},N^{(3)}_{\mu},N^{(4)}_{\mu})^{T}$ and $Y:=(B^{(1)}_{\mu},B^{(2)}_{\mu},B^{(3)}_{\mu},B^{(4)}_{\mu})^{T}$ along with the matrix
			\begin{equation}
		\Gamma := 
		\begin{pmatrix}
		\gamma_{11} & \gamma_{12} & \gamma_{13} & \gamma_{14}  \\
		\gamma_{21} & \gamma_{22} & \gamma_{23} & \gamma_{24}  \\
		\gamma_{31} & \gamma_{32} & \gamma_{33} & \gamma_{34}  \\
		\gamma_{41} & \gamma_{42} & \gamma_{43} & \gamma_{44}  \\
		\end{pmatrix} \label{G}
		\end{equation}
		Now, given that the matrix $\Gamma$ is not degenerate, that is if
		\beq
		det(\Gamma)\neq 0 \label{C}
		\eeq
		then, its inverse $\Gamma^{-1}$ exists. Then, multiplying $(\ref{tr})$ with the latter from the left we get
		\beq
		X=\Gamma^{-1}Y
		\eeq
		which relates the unknown traces $N^{(i)}_{\mu}$ to the known ones $(B^{(i)}_{\mu})$. More specifically, the component form of the above matrix equation gives us the relations
		\beq
	N^{(i)}_{\mu}=\sum_{j=1}^{4}\tilde{\gamma}_{ij}B^{(j)}_{\mu}	\label{NB}
		\eeq
	where $\tilde{\gamma}_{ij}$ are the elements of the inverse matrix $\Gamma^{-1}$. As a result we have fully eliminated the N-traces in terms of those of $B$. We may then substitute the last equation back in ($\ref{eq1}$) to arrive at
		\begin{gather}
	a_{1}N_{\alpha\mu\nu}+a_{2}N_{\nu\alpha\mu}+a_{3}N_{\mu\nu\alpha}+a_{4}N_{\alpha\nu\mu}+a_{5}N_{\nu\mu\alpha}+a_{6}N_{\mu\alpha\nu} \nonumber \\
	+b_{11}M^{(1)}_{\alpha\mu\nu}+b_{12}M^{(1)}_{\nu\alpha\mu}+b_{13}M^{(1)}_{\mu\nu\alpha}+b_{21}M^{(2)}_{\alpha\mu\nu}+b_{22}M^{(2)}_{\nu\alpha\mu}+b_{23}M^{(2)}_{\mu\nu\alpha}+b_{31}M^{(3)}_{\alpha\mu\nu}+b_{32}M^{(3)}_{\nu\alpha\mu}+b_{33}M^{(3)}_{\mu\nu\alpha} 
	=\hat{B}_{\alpha\mu\nu} \label{eq2}
	\end{gather}
	where 
	\beq
	\hat{B}_{\alpha\mu\nu} =B_{\alpha\mu\nu} -\sum_{i=1}^{4}\sum_{j=1}^{4}\Big( a_{7i}\tilde{\gamma}_{ij}B^{(j)}_{\mu}g_{\alpha\nu}+a_{8i}\tilde{\gamma}_{ij}B^{(j)}_{\nu}g_{\alpha\mu}+ a_{9i}\tilde{\gamma}_{ij}B^{(j)}_{\alpha}g_{\mu\nu} \Big)-\varepsilon^{\rho}_{\;\;\alpha\mu\nu}\sum_{i=1}^{3}\sum_{j=1}^{4}b_{i}\tilde{\gamma}_{ij}B^{(j)}_{\rho}
\eeq
	and we have renamed $c_{1}=a_{74}$, $c_{2}=a_{84}$ and $c_{3}=a_{94}$ in order to obtain a more compact form. We have now  solved only for the $4$ out of the $19$ unknown combinations as they appear in ($\ref{eq1}$). There remain $15$ more thus we need $15$ more equations in order to fully solve for $N$. To this end, we now consider the  $5$  possible independent  permutations of $(\ref{eq2})$ which read (including also $(\ref{eq2})$ itself and using shorthand sum notation)\footnote{See  also \cite{iosifidis2021solving}.}

		\begin{gather}
		a_{1}N_{\alpha\mu\nu}+a_{2}N_{\nu\alpha\mu}+a_{3}N_{\mu\nu\alpha}+a_{4}N_{\alpha\nu\mu}+a_{5}N_{\nu\mu\alpha}+a_{6}N_{\mu\alpha\nu}  +\sum_{i=1}^{3}\Big( b_{i1}M^{(i)}_{\alpha\mu\nu}+ b_{i2}M^{(i)}_{\nu\alpha\mu}+ b_{i3}M^{(i)}_{\mu\nu\alpha}\Big)	=\hat{B}_{\alpha\mu\nu} \label{1} \\ 
		a_{1}N_{\nu\alpha\mu}+a_{2}N_{\mu\nu\alpha}+a_{3}N_{\alpha\mu\nu}+a_{4}N_{\mu\alpha\nu}+a_{5}N_{\alpha\nu\mu}+a_{6}N_{\nu\mu\alpha}+\sum_{i=1}^{3}\Big( b_{i1}M^{(i)}_{\nu\alpha\mu}+ b_{i2}M^{(i)}_{\mu\nu\alpha}+ b_{i3}M^{(i)}_{\alpha\mu\nu}\Big)	=\hat{B}_{\nu\alpha\mu} \\
		a_{1}N_{\mu\nu\alpha}+a_{2}N_{\alpha\mu\nu}+a_{3}N_{\nu\alpha\mu}+a_{4}N_{\nu\mu\alpha}+a_{5}N_{\mu\alpha\nu}+a_{6}N_{\alpha\nu\mu}+\sum_{i=1}^{3}\Big( b_{i1}M^{(i)}_{\mu\nu\alpha}+ b_{i2}M^{(i)}_{\alpha\mu\nu}+ b_{i3}M^{(i)}_{\nu\alpha\mu}\Big)	=\hat{B}_{\mu\nu\alpha} \\
		a_{1}N_{\alpha\nu\mu}+a_{2}N_{\mu\alpha\nu}+a_{3}N_{\nu\mu\alpha}+a_{4}N_{\alpha\mu\nu}+a_{5}N_{\mu\nu\alpha}+a_{6}N_{\nu\alpha\mu} +\sum_{i=1}^{3}\Big( b_{i1}M^{(i)}_{\alpha\nu\mu}+ b_{i2}M^{(i)}_{\mu\alpha\nu}+ b_{i3}M^{(i)}_{\nu\mu\alpha}\Big)	=\hat{B}_{\alpha\nu\mu} \\
		a_{1}N_{\nu\mu\alpha}+a_{2}N_{\alpha\nu\mu}+a_{3}N_{\mu\alpha\nu}+a_{4}N_{\nu\alpha\mu}+a_{5}N_{\alpha\mu\nu}+a_{6}N_{\mu\nu\alpha}+\sum_{i=1}^{3}\Big( b_{i1}M^{(i)}_{\nu\mu\alpha}+ b_{i2}M^{(i)}_{\alpha\nu\mu}+ b_{i3}M^{(i)}_{\mu\alpha\nu}\Big)	=\hat{B}_{\nu\mu\alpha}\\
		a_{1}N_{\mu\alpha\nu}+a_{2}N_{\nu\mu\alpha}+a_{3}N_{\alpha\nu\mu}+a_{4}N_{\mu\nu\alpha}+a_{5}N_{\nu\alpha\mu}+a_{6}N_{\alpha\mu\nu}+\sum_{i=1}^{3}\Big( b_{i1}M^{(i)}_{\mu\alpha\nu}+ b_{i2}M^{(i)}_{\nu\mu\alpha}+ b_{i3}M^{(i)}_{\alpha\nu\mu}\Big)	=\hat{B}_{\mu\alpha\nu} \label{6}
		\end{gather}
		We have therefore gathered $6$ equations and we need $9$ more to go. Continuing we now contract ($\ref{eq2}$) with $\varepsilon^{\alpha\mu}_{\;\;\;\;\beta\gamma}$ and upon using the identity $\varepsilon_{\alpha\mu\beta\gamma}\varepsilon^{\kappa\lambda\rho\sigma}=(-1)^{s}4! \delta_{[\alpha}^{\kappa}\delta_{\mu}^{\lambda}\delta_{\beta}^{\rho}\delta_{\gamma]}^{\sigma}$ and its contractions, after some long calculations we finally arrive at
		\begin{gather}
		(a_{1}-a_{6})M^{(1)}_{\nu\beta\gamma}+(a_{4}-a_{3})M^{(2)}_{\nu\beta\gamma}+(a_{2}-a_{5})M^{(3)}_{\nu\beta\gamma}+ 2 (-1)^{s+1} (b_{21}+b_{23}+b_{31}+b_{33})N^{(1)}_{[\beta}g_{\gamma]\nu} \nonumber \\ +2 (-1)^{s+1}(b_{11}+b_{13}-b_{31}-b_{33})N^{(2)}_{[\beta}g_{\gamma]\nu}-2 (-1)^{s+1}(b_{11}+b_{13}+b_{21}+b_{23})N^{(3)}_{[\beta} g_{\gamma]\nu} \nonumber \\
		+2(-1)^{s}\Big[ -(b_{11}+b_{13})+2 b_{12}\Big] N_{[\beta\gamma]\nu}+2(-1)^{s}\Big[ -(b_{21}+b_{23})+2 b_{22}\Big] N_{[\beta|\nu|\gamma]}+2(-1)^{s}\Big[ -(b_{31}+b_{33})+2 b_{32}\Big] N_{\nu[\beta\gamma]}\nonumber \\=\varepsilon^{\alpha\mu}_{\;\;\;\;\beta\gamma}\hat{B}_{\alpha\mu\nu}
		\end{gather}
		Now renaming the indices $\beta\rightarrow \alpha$, $\gamma \rightarrow \mu$ and using ($\ref{NB}$) to remove the $N$-traces, after some rearranging we end up with
			\begin{gather}
(-1)^{s}( 2 b_{12}-b_{11}-b_{13})N_{\alpha\mu\nu}+(-1)^{s}(2 b_{32}-b_{31}-b_{33})N_{\nu\alpha\mu}-(-1)^{s}(2 b_{22}-b_{21}-b_{23})N_{\mu\nu\alpha}\nonumber \\
+(-1)^{s}(2 b_{22}-b_{21}-b_{23})N_{\alpha\nu\mu}-(-1)^{s}(2 b_{32}-b_{31}-b_{33})N_{\nu\mu\alpha}-(-1)^{s}(2 b_{12}-b_{11}-b_{13})N_{\mu\alpha\nu} \nonumber \\
+(a_{1}-a_{6})M^{(1)}_{\nu\alpha\mu}+(a_{4}-a_{3})M^{(2)}_{\nu\alpha\mu}+(a_{2}-a_{5})M^{(3)}_{\nu\alpha\mu}=	\breve{B}_{\alpha\mu\nu} \label{e1}
		\end{gather}
		where we have set
		\beq
	\breve{B}_{\alpha\mu\nu}=	\varepsilon^{\beta\gamma}_{\;\;\;\;\alpha\mu}\hat{B}_{\beta\gamma\nu}-2(-1)^{s}\sum_{j=1}^{4}\Big[ (b_{21}+b_{23}+b_{31}+b_{33})\tilde{\gamma}_{1j}+(b_{11}+b_{13}-b_{31}-b_{33})\tilde{\gamma}_{2j}-(b_{11}+b_{13}+b_{21}+b_{23})\tilde{\gamma}_{3j}\Big]B^{(j)}_{[\alpha}g_{\mu]\nu}
		\eeq
		Next we consider the index permutation $\alpha \rightarrow \mu \rightarrow \nu \rightarrow \alpha$ in ($\ref{e1}$) firstly one time and then two successive ones to obtain two more equations
		\begin{gather}
		(-1)^{s}( 2 b_{12}-b_{11}-b_{13})N_{\mu\nu\alpha}+(-1)^{s}(2 b_{32}-b_{31}-b_{33})N_{\alpha\mu\nu}-(-1)^{s}(2 b_{22}-b_{21}-b_{23})N_{\nu\alpha\mu}\nonumber \\
		+(-1)^{s}(2 b_{22}-b_{21}-b_{23})N_{\mu\alpha\nu}-(-1)^{s}(2 b_{32}-b_{31}-b_{33})N_{\alpha\nu\mu}-(-1)^{s}(2 b_{12}-b_{11}-b_{13})N_{\nu\mu\alpha} \nonumber \\
		+(a_{1}-a_{6})M^{(1)}_{\alpha\mu\nu}+(a_{4}-a_{3})M^{(2)}_{\alpha\mu\nu}+(a_{2}-a_{5})M^{(3)}_{\alpha\mu\nu}=	\breve{B}_{\mu\nu\alpha} \label{e2}
		\end{gather}
		\begin{gather}
		(-1)^{s}( 2 b_{12}-b_{11}-b_{13})N_{\nu\alpha\mu}+(-1)^{s}(2 b_{32}-b_{31}-b_{33})N_{\mu\nu\alpha}-(-1)^{s}(2 b_{22}-b_{21}-b_{23})N_{\alpha\mu\nu}\nonumber \\
		+(-1)^{s}(2 b_{22}-b_{21}-b_{23})N_{\nu\mu\alpha}-(-1)^{s}(2 b_{32}-b_{31}-b_{33})N_{\mu\alpha\nu}-(-1)^{s}(2 b_{12}-b_{11}-b_{13})N_{\alpha\nu\mu} \nonumber \\
		+(a_{1}-a_{6})M^{(1)}_{\mu\nu\alpha}+(a_{4}-a_{3})M^{(2)}_{\mu\nu\alpha}+(a_{2}-a_{5})M^{(3)}_{\mu\nu\alpha}=	\breve{B}_{\nu\alpha\mu} \label{e3}
		\end{gather}
		In the same manner, we now contract $(\ref{eq2})$ one time with $\varepsilon^{\alpha\nu}_{\;\;\;\;\beta\gamma}$ and  another with $\varepsilon^{\mu\nu}_{\;\;\;\;\beta\gamma}$ and again performing one and two successive permutations of the indices for each case respectively, we gather six more equations $(3+3)$
			\begin{gather}
(-1)^{s}(b_{21}+b_{22}-2 b_{23})N_{\alpha\mu\nu}-(-1)^{s}(b_{11}+b_{12}-2 b_{13})N_{\nu\alpha\mu}-(-1)^{s}(b_{31}+b_{32}-2 b_{33})N_{\mu\nu\alpha} \nonumber \\
+(-1)^{s}(b_{11}+b_{12}-2 b_{13})N_{\alpha\nu\mu}-(-1)^{s}(b_{21}+b_{22}-2 b_{23})N_{\nu\mu\alpha}+(-1)^{s}(b_{31}+b_{32}-2 b_{33})N_{\mu\alpha\nu} \nonumber \\
+(a_{4}-a_{2})M^{(1)}_{\mu\alpha\nu}+(a_{1}-a_{5})M^{(2)}_{\mu\alpha\nu}+(a_{6}-a_{3})M^{(3)}_{\mu\alpha\nu}=\bar{B}_{\alpha\mu\nu} \label{ee1}
		\end{gather}
		 	\begin{gather}
		 (-1)^{s}(b_{21}+b_{22}-2 b_{23})N_{\mu\nu\alpha}-(-1)^{s}(b_{11}+b_{12}-2 b_{13})N_{\alpha\mu\nu}-(-1)^{s}(b_{31}+b_{32}-2 b_{33})N_{\nu\alpha\mu} \nonumber \\
		 +(-1)^{s}(b_{11}+b_{12}-2 b_{13})N_{\mu\alpha\nu}-(-1)^{s}(b_{21}+b_{22}-2 b_{23})N_{\alpha\nu\mu}+(-1)^{s}(b_{31}+b_{32}-2 b_{33})N_{\nu\mu\alpha} \nonumber \\
		 +(a_{4}-a_{2})M^{(1)}_{\nu\mu\alpha}+(a_{1}-a_{5})M^{(2)}_{\nu\mu\alpha}+(a_{6}-a_{3})M^{(3)}_{\nu\mu\alpha}=\bar{B}_{\mu\nu\alpha}
		 \end{gather}
		 	\begin{gather}
		 (-1)^{s}(b_{21}+b_{22}-2 b_{23})N_{\nu\alpha\mu}-(-1)^{s}(b_{11}+b_{12}-2 b_{13})N_{\mu\nu\alpha}-(-1)^{s}(b_{31}+b_{32}-2 b_{33})N_{\alpha\mu\nu} \nonumber \\
		 +(-1)^{s}(b_{11}+b_{12}-2 b_{13})N_{\nu\mu\alpha}-(-1)^{s}(b_{21}+b_{22}-2 b_{23})N_{\mu\alpha\nu}+(-1)^{s}(b_{31}+b_{32}-2 b_{33})N_{\alpha\nu\mu} \nonumber \\
		 +(a_{4}-a_{2})M^{(1)}_{\alpha\nu\mu}+(a_{1}-a_{5})M^{(2)}_{\alpha\nu\mu}+(a_{6}-a_{3})M^{(3)}_{\alpha\nu\mu}=\bar{B}_{\nu\alpha\mu}
		 \end{gather}
		
			\begin{gather}
		(-1)^{s}(2 b_{31}-b_{32}- b_{33})N_{\alpha\mu\nu}-(-1)^{s}(2 b_{21}-b_{22}- b_{23})N_{\nu\alpha\mu}+(-1)^{s}(2 b_{11}-b_{12}- b_{13})N_{\mu\nu\alpha} \nonumber \\
		-(-1)^{s}(2 b_{31}-b_{32}- b_{33})N_{\alpha\nu\mu}-(-1)^{s}(2 b_{11}-b_{12}- b_{13})N_{\nu\mu\alpha}+(-1)^{s}(2 b_{21}-b_{22}- b_{23})N_{\mu\alpha\nu} \nonumber \\
		+(a_{3}-a_{5})M^{(1)}_{\alpha\mu\nu}+(a_{6}-a_{2})M^{(2)}_{\alpha\mu\nu}+(a_{1}-a_{4})M^{(3)}_{\alpha\mu\nu}=\mathring{B}_{\alpha\mu\nu}
		\end{gather}
		\begin{gather}
		(-1)^{s}(2 b_{31}-b_{32}- b_{33})N_{\mu\nu\alpha}-(-1)^{s}(2 b_{21}-b_{22}- b_{23})N_{\alpha\mu\nu}+(-1)^{s}(2 b_{11}-b_{12}- b_{13})N_{\nu\alpha\mu} \nonumber \\
		-(-1)^{s}(2 b_{31}-b_{32}- b_{33})N_{\mu\alpha\nu}-(-1)^{s}(2 b_{11}-b_{12}- b_{13})N_{\alpha\nu\mu}+(-1)^{s}(2 b_{21}-b_{22}- b_{23})N_{\nu\mu\alpha} \nonumber \\
		+(a_{3}-a_{5})M^{(1)}_{\mu\nu\alpha}+(a_{6}-a_{2})M^{(2)}_{\mu\nu\alpha}+(a_{1}-a_{4})M^{(3)}_{\mu\nu\alpha}=\mathring{B}_{\mu\nu\alpha}
		\end{gather}
		\begin{gather}
		(-1)^{s}(2 b_{31}-b_{32}- b_{33})N_{\nu\alpha\mu}-(-1)^{s}(2 b_{21}-b_{22}- b_{23})N_{\mu\nu\alpha}+(-1)^{s}(2 b_{11}-b_{12}- b_{13})N_{\alpha\mu\nu} \nonumber \\
		-(-1)^{s}(2 b_{31}-b_{32}- b_{33})N_{\nu\mu\alpha}-(-1)^{s}(2 b_{11}-b_{12}- b_{13})N_{\mu\alpha\nu}+(-1)^{s}(2 b_{21}-b_{22}- b_{23})N_{\alpha\nu\mu} \nonumber \\
		+(a_{3}-a_{5})M^{(1)}_{\nu\alpha\mu}+(a_{6}-a_{2})M^{(2)}_{\nu\alpha\mu}+(a_{1}-a_{4})M^{(3)}_{\nu\alpha\mu}=\mathring{B}_{\nu\alpha\mu} \label{ee6}
		\end{gather}
			where we have set
		\beq
		\bar{B}_{\alpha\mu\nu}:=	\varepsilon^{\beta\gamma}_{\;\;\;\;\alpha\nu}\hat{B}_{\beta\mu\gamma}-2(-1)^{s}\sum_{j=1}^{4}\Big[ (b_{21}+b_{22}+b_{31}+b_{32})\tilde{\gamma}_{1j}+(b_{11}+b_{12}-b_{31}-b_{32})\tilde{\gamma}_{2j}-(b_{11}+b_{12}+b_{21}+b_{22})\tilde{\gamma}_{3j}\Big]B^{(j)}_{[\alpha}g_{\mu]\nu}
		\eeq
	and
			\beq
		\mathring{B}_{\alpha\mu\nu}:=	\varepsilon^{\beta\gamma}_{\;\;\;\;\mu\nu}	\mathring{B}_{\alpha\beta\gamma}-2(-1)^{s+1}\sum_{j=1}^{4}\Big[ (b_{22}+b_{23}+b_{32}+b_{33})\tilde{\gamma}_{1j}+(b_{12}+b_{13}-b_{32}-b_{33})\tilde{\gamma}_{2j}-(b_{22}+b_{23}+b_{12}+b_{13})\tilde{\gamma}_{3j}\Big]B^{(j)}_{[\mu}g_{\nu]\alpha}
		\eeq
		We now place equations $(\ref{1})-(\ref{6})$, $(\ref{e1})-(\ref{e3})$ and $(\ref{ee1})-(\ref{ee6})$ in that exact order and express the system of the above $15$ equations in matrix form as
		\beq
		A \mathcal{N}=\mathcal{B}
		\eeq
		where $A$
		is the $15\times 15$ matrix of coefficients (see appendix) and we have also defined the columns 
		\beq \mathcal{N}=\Big(N_{\alpha\mu\nu},N_{\nu\alpha\mu},N_{\mu\nu\alpha},N_{\alpha\nu\mu},N_{\nu\mu\alpha},N_{\mu\alpha\nu},M^{(1)}_{\alpha\mu\nu},M^{(1)}_{\nu\alpha\mu},M^{(1)}_{\mu\nu\alpha},M^{(2)}_{\alpha\mu\nu},M^{(2)}_{\nu\alpha\mu},M^{(2)}_{\mu\nu\alpha},M^{(3)}_{\alpha\mu\nu},M^{(3)}_{\nu\alpha\mu},M^{(3)}_{\mu\nu\alpha}\Big)^{T}
		\eeq
		as well as
			\beq \mathcal{B}=\Big(\hat{B}_{\alpha\mu\nu},\hat{B}_{\nu\alpha\mu},\hat{B}_{\mu\nu\alpha},\hat{B}_{\alpha\nu\mu},\hat{B}_{\nu\mu\alpha},\hat{B}_{\mu\alpha\nu},\breve{B}_{\alpha\mu\nu},\breve{B}_{\mu\nu\alpha},\breve{B}_{\nu\alpha\mu},\bar{B}_{\alpha\mu\nu},\bar{B}_{\mu\nu\alpha},\bar{B}_{\nu\alpha\mu},\mathring{B}_{\alpha\mu\nu},\mathring{B}_{\mu\nu\alpha},\mathring{B}_{\nu\alpha\mu}\Big)^{T}
		\eeq
		Then, given that the matrix $A$ is non-singular, namely
		\beq
		 det(A)\neq 0 \label{A}
		 \eeq
		 we can formally multiply the above matrix equation with $A^{-1}$ from the left, to get
		\beq
			 \mathcal{N}=A^{-1}\mathcal{B}
		\eeq
		and finally equating the first elements of the latter column equation, we arrive at the stated result 
		\begin{gather}
		N_{\alpha\mu\nu}=\tilde{\alpha}_{11}\hat{B}_{\alpha\mu\nu}+\tilde{\alpha}_{12}\hat{B}_{\nu\alpha\mu}+\tilde{\alpha}_{13} \hat{B}_{\mu\nu\alpha}+\tilde{\alpha}_{14} \hat{B}_{\alpha\nu\mu}+\tilde{\alpha}_{15} \hat{B}_{\nu\mu\alpha} +\tilde{\alpha}_{16} \hat{B}_{\mu\alpha\nu}+\tilde{\alpha}_{17}\breve{B}_{\alpha\mu\nu}+\tilde{\alpha}_{18} \breve{B}_{\mu\nu\alpha} + \tilde{\alpha}_{19}\breve{B}_{\nu\alpha\mu} \nonumber \\
		+\tilde{\alpha}_{1,10}\bar{B}_{\alpha\mu\nu}+\tilde{\alpha}_{1,11}\bar{B}_{\mu\nu\alpha}+\tilde{\alpha}_{1,12}\bar{B}_{\nu\alpha\mu}+\tilde{\alpha}_{1,13}\mathring{B}_{\alpha\mu\nu}+\tilde{\alpha}_{1,14}\mathring{B}_{\mu\nu\alpha}+\tilde{\alpha}_{1,15}\mathring{B}_{\nu\alpha\mu} \label{NNN}
		\end{gather}
		where $\tilde{\alpha}_{ij}$ are the elements of the inverse matrix $A^{-1}$. The latter is the exact and unique solution of ($\ref{eq1}$) provided that the non-degeneracy conditions ($\ref{C}$) and ($\ref{A}$) are satisfied.

	\end{proof}

	Evidently, when $B_{\alpha}=0$ and $det(A)\neq 0$ along with $det(\Gamma)\neq 0$  we have that the unique solution of $(\ref{eq1})$ is always $N_{\alpha\mu\nu}=0$. More precisely, we have the following.
	\begin{corollary*}
		If $B_{\alpha\mu\nu}=0$ and both matrices $A$ and $\Gamma$ are non-singular, then the unique solution of $(\ref{eq1})$ is $N_{\alpha\mu\nu}=0$.
	\end{corollary*}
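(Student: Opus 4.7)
The plan is to reduce the Corollary to a direct consequence of the Theorem rather than re-do any contractions. Since the hypotheses $\det(A)\neq 0$ and $\det(\Gamma)\neq 0$ are precisely the non-degeneracy conditions required for the Theorem to apply, equation (\ref{eq1}) has a unique solution expressed by (\ref{NNN}). My task therefore reduces to evaluating the right-hand side of (\ref{NNN}) at $B_{\alpha\mu\nu}=0$ and observing that it collapses to zero.

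First I would note that if $B_{\alpha\mu\nu}\equiv 0$ then, by linearity, each of the four traces $B^{(j)}_{\mu}$ with $j=1,2,3$ (the metric traces) and $j=4$ (the Levi-Civita pseudo-trace $\varepsilon^{\alpha\mu\nu\lambda}B_{\mu\nu\lambda}$) vanishes identically, because each is a linear contraction of $B$ with either $g$ or $\varepsilon$.

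Next I would cascade this vanishing through the auxiliary tensors that appear in (\ref{NNN}). Inspecting the definition of $\hat{B}_{\alpha\mu\nu}$, its first term is $B_{\alpha\mu\nu}$ and the remaining sums are linear in the $B^{(j)}_{\mu}$'s, so $\hat{B}_{\alpha\mu\nu}=0$. In turn, the definitions of $\breve{B}_{\alpha\mu\nu}$, $\bar{B}_{\alpha\mu\nu}$ and $\mathring{B}_{\alpha\mu\nu}$ are each linear in $\hat{B}$ and in the $B^{(j)}_{\mu}$'s, so all three auxiliary tensors vanish as well. Substituting these zeros into (\ref{NNN}) produces $N_{\alpha\mu\nu}=0$.

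There is no genuine obstacle here; the only point worth flagging is uniqueness, which is inherited directly from the Theorem: the hypothesis $\det(A)\neq 0$ ensures that $A\mathcal{N}=\mathcal{B}$ has the unique solution $\mathcal{N}=A^{-1}\mathcal{B}$, and $\det(\Gamma)\neq 0$ ensures the same for the preliminary trace system, so no alternative non-trivial solution $N_{\alpha\mu\nu}\neq 0$ is compatible with $B_{\alpha\mu\nu}=0$. This makes $N_{\alpha\mu\nu}=0$ both an obvious solution of the homogeneous equation and, by the Theorem, the only one.
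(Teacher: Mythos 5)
Your proposal is correct and matches the paper's own treatment: the Corollary is presented there as an immediate consequence of the Theorem, exactly as you argue, with all the hatted, breve, barred and ring quantities vanishing by linearity in $B$ and uniqueness inherited from the invertibility of $A$ and $\Gamma$. No further comment is needed.
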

	
	\section{Conclusions}
	We have considered the most general  linear tensor equation of a rank $3$ tensor $N$ in terms of a given source $B$  in $4$ dimensions. The latter is a $30$ parameter tensor equation as given by ($\ref{eq1}$). By following a step by step procedure and given two rather general non-degeneracy conditions among the parameters we provided the unique and exact solution of the components $N_{\alpha\mu\nu}$ in terms of the known ones $B_{\alpha\mu\nu}$, its dualizations and its contractions. The solution is given by the expression ($\ref{NNN}$). An immediate conclusion is that, provided that the  the non-degeneracy conditions hold, in the absence of sources (i.e. when $B_{\alpha\mu\nu}=0$), $N_{\alpha\mu\nu}=0$ is the only (unique) solution of the $30$ parameter tensor equation. As a final remark, let us note that our result finds a natural application in Metric-Affine Theories of gravitation but it could just as well be applied to other physical situations.

	\section{Acknowledgments}	This research is co-financed by Greece and the European Union (European Social Fund- ESF) through the
	Operational Programme 'Human Resources Development, Education and Lifelong Learning' in the context
	of the project “Reinforcement of Postdoctoral Researchers-2nd Cycle” (MIS-5033021), implemented by the
	State Scholarships Foundation (IKY).

	\appendix
	
	\section{The $\gamma_{ij}$'s}
	The relations between the elements of $\Gamma$ and the $30$ initial parameters read
	\begin{gather}
	\gamma_{11}=a_{1}+a_{3}+a_{71}+4 a_{81}+a_{91}\;\;, \;\; \gamma_{12}=a_{2}+a_{4}+a_{72}+4 a_{82}+ a_{92}\;\;, \;\; \gamma_{13}=a_{5}+a_{6}+a_{73}+4 a_{83}+a_{93} \nonumber \\
	\gamma_{21}=a_{2}+a_{5}+4 a_{71}+ a_{81}+a_{91}\;\;, \;\; \gamma_{22}=a_{1}+a_{6}+4 a_{72}+a_{82}+ a_{92}\;\;, \;\; \gamma_{23}=a_{3}+a_{4}+4 a_{73}+ a_{83}+a_{93} \nonumber \\		
	\gamma_{31}=a_{5}+a_{6}+a_{71}+ a_{81}+ 4 a_{91}\;\;, \;\; \gamma_{32}=a_{3}+a_{4}+a_{72}+ a_{82}+4 a_{92}\;\;, \;\; \gamma_{31}=a_{1}+a_{2}+a_{73}+ a_{83}+4 a_{93} \nonumber \\
	\gamma_{41}=-2 (-1)^{s}\Big( b_{21}+b_{22}+b_{23}+b_{31}+b_{32}+b_{33} -3 b_{1}\Big)\; \; \; ,\; \;\; \;  \gamma_{42}=-2 (-1)^{s}\Big( b_{11}+b_{12}+b_{13}-b_{31}-b_{32}-b_{33} -3 b_{2}\Big) \nonumber \\
	\gamma_{43}=2 (-1)^{s}\Big( b_{11}+b_{12}+b_{13}+b_{21}+b_{22}+b_{23}+3 b_{3} \Big)\; \;\;,\;\; \; \; \gamma_{44}=a_{1}+a_{2}+a_{3}-a_{4}-a_{5}-a_{6} \nonumber \\
	\gamma_{14}=c_{1}+4 c_{2}+c_{3}-b_{12}-b_{13}-b_{21}-b_{23}-b_{31}-b_{33}\;\;, \;\; \gamma_{24}=4 c_{1}+c_{2}+ c_{3}+b_{11}+b_{22}-b_{12}-b_{21}-b_{31}-b_{32}\nonumber \\
	\gamma_{34}=c_{1}+c_{2}+4 c_{3}+b_{12}-b_{13}-b_{22}+b_{23}+b_{32}-b_{33}
	\end{gather}
These are the elements of the $4 \times 4$ matrix $\Gamma$.	
\section{The $\alpha_{ij}$'s }
Placing equations $(\ref{1})-(\ref{6})$, $(\ref{e1})-(\ref{e3})$ and $(\ref{ee1})-(\ref{ee6})$ one after another in that exact order, the coefficients of the combinations of the tensor $N$ in each equation represent the rows of matrix $A$ (with the system of equations written in matrix form $A \mathcal{N}=\mathcal{B}$). That is, the coefficients appearing in $(\ref{1})$ are the first row elements of $A$, namely
\begin{gather}
\alpha_{11}=a_{1}\;\;, \;\; \alpha_{12}=a_{2}\;\;, \;\;\alpha_{13}=a_{3}\;\;, \;\;\alpha_{14}=a_{4}\;\;, \;\;\alpha_{15}=a_{5}\;\;, \;\;\alpha_{16}=a_{6}\;\;, \;\; 
\alpha_{17}=b_{11}\;\;, \;\; \alpha_{18}=b_{12}, \;\; \alpha_{19}=b_{13}\nonumber \\
\alpha_{1,10}=b_{21}\;\;, \;\; \alpha_{1,11}=b_{22}, \;\; \alpha_{1,12}=b_{23} \;\;, \;\; \alpha_{1,13}=b_{31} \;\; , \;\; \alpha_{1,14}=b_{32} \;\;,\;\; \alpha_{1,15}=b_{33}
\end{gather}
and of course the same goes for every other row with the last one being (the coefficients of the row corresponding to ($\ref{ee6}$))
\begin{gather}
\alpha_{15,1}=(-1)^{s}(2 b_{11}-b_{12}-b_{13}) \;\;, \;\; \alpha_{15,2}=(-1)^{s}(2 b_{31}-b_{32}-b_{33}) \;\;, \;\; \alpha_{15,3}=-(-1)^{s}(2 b_{21}-b_{22}-b_{23}) \nonumber \\
\alpha_{15,4}=(-1)^{s}(2 b_{21}-b_{22}-b_{23}) \;\;, \;\; \alpha_{15,5}=-(-1)^{s}(2 b_{31}-b_{32}-b_{33}) \;\;, \;\; \alpha_{15,6}=-(-1)^{s}(2 b_{11}-b_{12}-b_{13}) \nonumber \\
\alpha_{15,7}=0\;\;, \;\; \alpha_{15,8}=a_{3}-a_{5} \;\;, \;\; \alpha_{15,9}=0 \;\;, \;\; \alpha_{15,10}=0 \;\;, \;\; \alpha_{15,11}=a_{6}-a_{2} \;\;, \;\; \alpha_{15,12}=0 \nonumber \\
\alpha_{15,3}=0\;\;, \;\; \alpha_{15,14}=a_{1}-a_{4} \;\;, \;\; \alpha_{15,15}=0 
\end{gather}

	\section{Details on the derivations}
	Let us now provide some additional information regarding the calculations we used for the proof. Firstly, using the antisymmetry of $\varepsilon$ and  of $M^{(i)}_{\alpha\mu\nu}$'s in their last pair of indices we trivially find
	\beq
	\varepsilon^{\alpha\mu}_{\;\;\;\;\beta\gamma}M^{(i)}_{\alpha\mu\nu}=\varepsilon^{\alpha\mu}_{\;\;\;\;\beta\gamma}M^{(i)}_{\mu\nu\alpha} \;,\;\;\;\; \forall \; i=1,2,3
	\eeq

	 Continuing, we compute
	\beq
	\varepsilon^{\alpha\mu}_{\;\;\;\;\beta\gamma}M^{(1)}_{\alpha\mu\nu}=2(-1)^{s+1}\Big[ \Big(N^{(2)}_{[\beta}-N^{(3)}_{[\beta}\Big) g_{\gamma]\nu}+N_{[\beta\gamma]\nu} \Big]
	\eeq
		\beq
	\varepsilon^{\alpha\mu}_{\;\;\;\;\beta\gamma}M^{(2)}_{\alpha\mu\nu}=2(-1)^{s+1}\Big[ \Big(N^{(1)}_{[\beta}-N^{(3)}_{[\beta}\Big) g_{\gamma]\nu}+N_{[\beta|\nu|\gamma]} \Big]
	\eeq
		\beq
	\varepsilon^{\alpha\mu}_{\;\;\;\;\beta\gamma}M^{(3)}_{\alpha\mu\nu}=2(-1)^{s+1}\Big[ \Big(N^{(1)}_{[\beta}-N^{(2)}_{[\beta}\Big) g_{\gamma]\nu}+N_{\nu[\beta\gamma]} \Big]
	\eeq
	The identity $\varepsilon_{\mu\alpha\beta\gamma}\varepsilon^{\mu\kappa\lambda\rho}=(-1)^{s}3! \delta_{[\alpha}^{\kappa}\delta_{\beta}^{\lambda}\delta_{\gamma]}^{\rho}$ was of great use here. In the same manner we find
	\beq
	\varepsilon^{\alpha\mu}_{\;\;\;\;\beta\gamma}M^{(1)}_{\nu\alpha\mu}=4(-1)^{s} N_{[\beta\gamma]\nu}
	\eeq
	\beq
	\varepsilon^{\alpha\mu}_{\;\;\;\;\beta\gamma}M^{(2)}_{\nu\alpha\mu}=4(-1)^{s} N_{[\beta|\nu|\gamma]}
	\eeq
	\beq
	\varepsilon^{\alpha\mu}_{\;\;\;\;\beta\gamma}M^{(3)}_{\nu\alpha\mu}=4(-1)^{s} N_{\nu[\beta\gamma]}
	\eeq
which we useful in proving out our Theorem.

	\bibliographystyle{unsrt}
	\bibliography{ref}

\begin{thebibliography}{1}

\bibitem{iosifidis2021solving}
Damianos Iosifidis.
\newblock Solving linear tensor equations.
\newblock {\em Universe}, 7(10):383, 2021.

\bibitem{qi2018third}
Liqun Qi, Haibin Chen, and Yannan Chen.
\newblock Third order tensors in physics and mechanics.
\newblock In {\em Tensor eigenvalues and their applications}, pages 207--248.
  Springer, 2018.

\bibitem{auffray2013geometrical}
Nicolas Auffray.
\newblock Geometrical picture of third-order tensors.
\newblock In {\em Generalized continua as models for materials}, pages 17--40.
  Springer, 2013.

\bibitem{landsberg2012tensors}
Joseph~M Landsberg.
\newblock Tensors: geometry and applications.
\newblock {\em Representation theory}, 381(402):3, 2012.

\bibitem{itin2020decomposition}
Yakov Itin and Shulamit Reches.
\newblock Decomposition of third-order constitutive tensors.
\newblock {\em Mathematics and Mechanics of Solids}, page 10812865211016530,
  2020.

\bibitem{hehl1995metric}
Friedrich~W Hehl, J~Dermott McCrea, Eckehard~W Mielke, and Yuval Ne'eman.
\newblock Metric-affine gauge theory of gravity: field equations, noether
  identities, world spinors, and breaking of dilation invariance.
\newblock {\em Physics Reports}, 258(1-2):1--171, 1995.

\bibitem{iosifidis2019metric}
Damianos Iosifidis.
\newblock Metric-affine gravity and cosmology/aspects of torsion and
  non-metricity in gravity theories.
\newblock {\em arXiv preprint arXiv:1902.09643}, 2019.

\bibitem{schouten1954ricci}
JA~Schouten.
\newblock Ricci-calculus. an introduction to tensor analysis and its
  geometrical applications, 1954.

\end{thebibliography}

	\end{document}